\documentclass[12pt]{article}
\usepackage[T1]{fontenc}
\usepackage[utf8]{inputenc}
\usepackage{lmodern}
\usepackage{fullpage}
\usepackage{amsmath,amssymb,amsthm,mathtools}
\usepackage{microtype}
\usepackage{booktabs,array}
\usepackage{xcolor}
\usepackage{enumitem}
\usepackage{float}

\usepackage{hyperref}
\setlist[enumerate]{itemsep=0.3em,topsep=0.4em}

\newcommand{\seqnum}[1]{\href{https://oeis.org/#1}{\rm \underline{#1}}}

\theoremstyle{plain}
\newtheorem{theorem}{Theorem}

\newtheorem{lemma}[theorem]{Lemma}
\newtheorem{proposition}[theorem]{Proposition}

\theoremstyle{definition}

\theoremstyle{remark}

\newcommand{\Enn}{\mathbb{N}}
\newcommand{\Zee}{\mathbb{Z}}

\newcommand{\Par}{\psi}
\newcommand{\ii}{\mathrm{i}}
\newcommand{\vTwo}{\nu_2}

\newcommand{\word}{\mathbf{a}}

\title{An infinite walk in $\Enn^{16}$, using only unit steps,
with no three collinear points}

\author{Jeffrey Shallit\\
School of Computer Science\\
University of Waterloo\\
Waterloo, ON  N2L 3G1\\
Canada\\
\href{mailto:shallit@uwaterloo.ca}{\tt shallit@uwaterloo.ca}}

\begin{document}
\maketitle

\begin{abstract}
Is it possible to walk to infinity, avoiding three
collinear points, using as steps only the standard unit basis
vectors $(0,0,\ldots, 0, 1, 0, \ldots, 0)$?  We prove this
is possible in $16$ dimensions.  
\end{abstract}

The results in this paper have now been superseded by the much
stronger results in \url{https://arxiv.org/abs/2609.20366}.

\section{Introduction}
Let $k \geq 2$, and consider an infinite
walk in $\Enn^k$ using only steps that are
basis vectors of the form $(0,0,\ldots, 0,1,0,\ldots, 0)$.  For which 
$k$ can one avoid some finite number of collinear points?

For dimension $k = 2$ every such infinite planar walk must have every possible
finite number of collinear points.
This is the content of the famous Gerver-Ramsey theorem 
\cite{Bro71a,Mo72,GR79}.  Also see \cite{Ko26a}.

For dimension
$k = 3$ Gerver and Ramsey proved that there is an infinite walk that avoids
$5^{11}+1$ collinear points.  This was recently improved to
$189$ by Lidbetter \cite{Lid24}.

Of course it is impossible to avoid $2$ collinear points in such a
walk.  This
raises the natural question:  for which dimensions
$k$ can one avoid three collinear
points?  One cannot do it for $k = 3$, as shown by Brown
\cite{Bro71a}, since every walk of $8$ unit steps contains
three collinear points.    

In this paper we solve the problem for $k = 16$.  The construction can be
given explicitly as a morphism on $16$ letters.

The description of such a walk can be given picturesquely as follows:  if
an infinitesimal obstruction is placed at each lattice point visited, then
it is possible to see all the other lattice points of the walk from every
point of the walk.

\subsection{A reformulation}

The problem of avoiding $3$ collinear points
can be rephrased in an equivalent way as follows, which interprets
the original problem in the terminology of combinatorics on words.

Consider an infinite sequence $(a_n)_{n \geq 0}$ over a finite alphabet $\Sigma_k = \{ 0,1,\ldots, k-1 \}$, and call it {\it good\/} if it contains no two consecutive nonempty blocks $x,y$ (possibly of different sizes) with the property that the $|x|_a / |x| = |y|_a / |y|$ for all letters $a \in \Sigma_k$.  Here $|x|$ is the length of the block and $|x|_a$ is the number of occurrences of a letter $a$.  

The vector $\Par(x) := (|x|_0, |x|_1, \ldots, |x|_{k-1})$ is often called
the {\it Parikh vector} of the word $x \in \{ 0,1,\ldots, k-1\}^*$.
The vector of letter frequencies
$\Par(x)/|x|$ is called the normalized Parikh vector.  Thus a good
sequence has no two consecutive nonempty blocks with the same normalized
Parikh vector.

Equality of letter frequencies is sometimes called \emph{weak abelian
equivalence}; see the discussion 
in~\cite{AP16} and ~\cite[Section 8.2]{FP23}.
Accordingly, the forbidden factor $xy$ is sometimes
termed a {\it weak abelian square}.
(An ordinary \emph{abelian square} additionally has $|x|=|y|$ and consists
of two consecutive words, where the second is a permutation of the first.
Avoiding ordinary abelian squares alone is therefore
not the whole requirement in this problem.)

\begin{proposition}
\label{prop:walk}
Let $\Sigma=\{0,\ldots,k-1\}$ and let $E_0,\ldots,E_{k-1}$ be the
standard basis vectors of $\Enn^k$.  Define
\[
  P_0=0,\qquad P_n=\sum_{j=0}^{n-1}E_{b_j}\quad(n\geq1).
\]
The word $(b_n)_{n\geq0}$ is good if and only if no three distinct
vertices of $(P_n)_{n\geq0}$ are collinear.
\end{proposition}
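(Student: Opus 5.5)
The plan is to translate collinearity of three walk points directly into the equality of normalized Parikh vectors for two consecutive blocks. The basic observation is that for $m<n$ we have
\[
P_n-P_m=\sum_{j=m}^{n-1}E_{b_j}=\Par(b_m b_{m+1}\cdots b_{n-1}),
\]
the Parikh vector of the block $b[m..n-1]$. Since every step is a nonnegative nonzero vector, the coordinate sum $|P_n|_1$ equals $n$ and strictly increases. Hence the vertices $P_n$ are pairwise distinct, and distinct indices give distinct points. So three distinct vertices correspond exactly to three indices $i<j<l$.

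First, suppose $P_i,P_j,P_l$ with $i<j<l$ are collinear. Set $x=b[i..j-1]$ and $y=b[j..l-1]$; these are consecutive nonempty blocks. Collinearity means $P_l-P_j=\lambda(P_j-P_i)$ for some real $\lambda$. Both difference vectors are nonzero, since they have coordinate sums $|y|$ and $|x|$ respectively. Taking coordinate sums gives $|y|=\lambda|x|$, so $\lambda=|y|/|x|>0$. Therefore $\Par(y)/|y|=\Par(x)/|x|$, and $xy$ is a weak abelian square. By contraposition, if $(b_n)$ is good then no three distinct vertices are collinear.

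Conversely, suppose $(b_n)$ is not good. Then there are consecutive nonempty blocks $x=b[i..j-1]$ and $y=b[j..l-1]$ with $\Par(x)/|x|=\Par(y)/|y|$. This gives $P_j-P_i=\Par(x)$ and $P_l-P_j=\Par(y)=(|y|/|x|)\Par(x)$. Consequently $P_i,P_j,P_l$ lie on a common line through $P_i$ with direction $\Par(x)$, and they are distinct because $i<j<l$. This proves the equivalence.

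The only step needing care is the one that uses ordering. Three collinear points on the walk need not have the middle index in the geometric middle a priori. However, labeling the indices so that $i<j<l$ is all that is needed. Collinearity is symmetric, and positivity of $\lambda$ is forced automatically by the coordinate-sum argument, so no case split is required. I expect no real obstacle beyond stating this ordering point and the distinctness of the vertices cleanly.
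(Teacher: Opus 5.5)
Your proof is correct and follows essentially the same route as the paper: you get distinctness of the vertices from the coordinate sums, identify the displacements $P_j-P_i$ and $P_l-P_j$ with the Parikh vectors of the two consecutive blocks, and use coordinate sums to force the scalar to equal $|y|/|x|$. The converse is then read off directly, exactly as in the paper.
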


\begin{proof}
The sum of the coordinates of $P_n$ is $n$, so all vertices are
distinct.  Fix indices $\alpha<\beta<\gamma$ and set
$p=\beta-\alpha>0$, $q=\gamma-\beta>0$.  With $x,y$ as in the
definition,
\[
  P_\beta-P_\alpha=\Par(x),\qquad
  P_\gamma-P_\beta=\Par(y).
\]
If the three vertices are collinear, their two displayed nonzero
displacements are scalar multiples: $\Par(y)=\lambda\Par(x)$.
Taking coordinate sums gives $q=\lambda p$, so $\lambda=q/p>0$.
Consequently $\Par(x)/p=\Par(y)/q$.  Conversely, this equality makes
the two displacements positive scalar multiples, so the three vertices
are collinear.  This holds for every ordered triple of indices, proving
both directions.
\end{proof}

For avoidance of triples, Cambie and Kalviainen~\cite{CK26} recently gave a
different construction in $\Zee^3$ with sixteen possible step vectors, 
but they are not all unit vectors.  Our construction is a small
variation on theirs.  Also see \cite{Ko26b}, which gave a lower bound
on the length of the longest good sequence in dimension $k$ (now made
obsolete for $k \geq 16$).

\section{The construction}

We will show that the sequence
$${\bf b} = 1, 5, 6, 9, 6, 10, 11, 13, 6, 10, 11, 14, 11, 15, 12, 1, 6, 10, 11, 14, 11, 15, 12, 2, 11, 15, 12, 3, 12, 0, 1, 5,\ldots$$
is good.  This sequence can be defined as the fixed point of the following
morphism:
\begin{table}[H]
\begin{center}
\begin{tabular}{@{}cc@{\qquad\qquad}cc@{}}
\toprule
Letter & Image & Letter & Image\\
\midrule
$0$ & $1\;4$ & $8$  & $11\;12$\\
$1$ & $1\;5$ & $9$  & $11\;13$\\
$2$ & $1\;6$ & $10$ & $11\;14$\\
$3$ & $1\;7$ & $11$ & $11\;15$\\
$4$ & $6\;8$ & $12$ & $12\;0$\\
$5$ & $6\;9$ & $13$ & $12\;1$\\
$6$ & $6\;10$ & $14$ & $12\;2$\\
$7$ & $6\;11$ & $15$ & $12\;3$\\
\bottomrule
\end{tabular}
\end{center}
\caption{The morphism generating the good sequence $\bf b$.}
\label{tab1}
\end{table}

For $n\in\Enn$, let $s_2(n)$ denote the number of $1$s in the binary
expansion of $n$, and let $t_n=s_2(n)\pmod 4$.  This is sequence
\seqnum{A179868} in the OEIS \cite{OEIS}.
Set
\begin{equation}
  \Gamma=\{0,1,2,3\}^2,
  \qquad a_n=(t_n,t_{n+1}),
  \qquad \word=a_0a_1a_2\cdots.
  \label{eq:word}
\end{equation}
Each ordered pair is \emph{one letter} of the sixteen-letter alphabet
$\Gamma$.  For example,
\[
  \word=(0,1)(1,1)(1,2)(2,1)(1,2)(2,2)(2,3)(3,1)\cdots.
\]
Now form the desired good sequence
$$ {\bf b} = 1, 5, 6, 9, 6, 10, 11, 13, 6, 10, 11, \ldots$$
by applying the coding
\begin{equation}
  \rho(r,s)=4r+s
  \label{eq:relabel}
\end{equation}
to each element of $\word$.  

Appending a binary digit gives
\[
  s_2(2n)=s_2(n),\qquad s_2(2n+1)=s_2(n)+1.
\]
Hence
\begin{equation}
  t_{2n}=t_n,\qquad t_{2n+1}=t_n+1\pmod4.
  \label{eq:state-recursion}
\end{equation}
For $r\in\{0,1,2,3\}$, write $r^+$ for the representative of
$r+1\pmod4$ in that set.

\begin{proposition}
\label{prop:morphism}
The word $\word$ is the fixed point starting with $(0,1)$ of the
$2$-uniform morphism
\begin{equation}
  H(r,s)=(r,r^+)(r^+,s).
  \label{eq:morphism}
\end{equation}
\end{proposition}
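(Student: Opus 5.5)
The plan is to show that applying $H$ to $\word$ returns $\word$, letter by letter, and then to use uniqueness of the fixed point starting with $(0,1)$. Concretely, I will verify the identity
\[
  H(a_n)=a_{2n}\,a_{2n+1}\qquad(n\ge 0),
\]
which is equivalent to $H(\word)=\word$. Since $H$ is $2$-uniform, the $n$-th letter of $\word$ is sent to positions $2n$ and $2n+1$.

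The identity follows from the recursion \eqref{eq:state-recursion}. Write $a_n=(r,s)$ with $r=t_n$ and $s=t_{n+1}$. Then:
\begin{itemize}
  \item $a_{2n}=(t_{2n},t_{2n+1})=(t_n,t_n+1 \bmod 4)=(r,r^+)$;
  \item $a_{2n+1}=(t_{2n+1},t_{2n+2})=(r^+,t_{n+1})=(r^+,s)$, using $t_{2n+2}=t_{2(n+1)}=t_{n+1}$.
\end{itemize}
This is exactly $H(r,s)$ as given in \eqref{eq:morphism}.

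It remains to identify $\word$ as the fixed point starting with $(0,1)$. First, $a_0=(t_0,t_1)=(0,1)$. Next, $H(0,1)=(0,1)(1,1)$ begins with $(0,1)$ and has length $2>1$. So the iterates $H^m(0,1)$ form a chain of prefixes whose lengths $2^m$ tend to infinity, and their limit is the unique fixed point of $H$ starting with $(0,1)$.

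Because $H(\word)=\word$ and $\word$ starts with $(0,1)$, induction gives that $H^m(0,1)$ is a prefix of $\word$ for every $m$. Hence $\word$ equals that limit.

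No step is a real obstacle. The only point needing care is the second coordinate of $a_{2n+1}$: one must note that $2n+2$ is even, so that $t_{2n+2}=t_{n+1}$, and not mistakenly apply the odd-index rule there.
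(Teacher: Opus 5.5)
Your proof is correct and follows essentially the same route as the paper: you derive $H(a_n)=a_{2n}a_{2n+1}$ from $t_{2n}=t_n$ and $t_{2n+1}=t_n^+$, and then identify $\word$ with the limit of the nested prefixes $H^m(0,1)$. Your remark that $\word=H^m(\word)$ begins with $H^m(a_0)=H^m(0,1)$ states the uniqueness step a little more explicitly than the paper's one-line version.
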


\begin{proof}
A morphism acts on words by replacing each letter with its image and
concatenating the resulting images.  Each image in
\eqref{eq:morphism} consists of two letters, so $H$ is $2$-uniform.
By \eqref{eq:state-recursion},
\begin{align*}
 a_{2n}&=(t_{2n},t_{2n+1})=(t_n,t_n^+),\\
 a_{2n+1}&=(t_{2n+1},t_{2n+2})=(t_n^+,t_{n+1}).
\end{align*}
Thus $H(a_n)=a_{2n}a_{2n+1}$ for every $n$, and concatenation gives
$H(\word)=\word$.

Also $t_0=0$, $t_1=1$, and
$H(0,1)=(0,1)(1,1)$.  Therefore $H^{j+1}(0,1)$ begins with
$H^j(0,1)$, whose length is $2^j$.  These prefixes determine
a unique infinite word.  
Since $\word$
is such a fixed point, it is precisely this limit.
\end{proof}

Finally, applying the coding $\rho$ rewrites the morphism as
\begin{equation}
  (4r+s) \rightarrow (4r+r^+)\,(4r^++s),
  \label{eq:numeric-morphism}
\end{equation}
which gives us the morphism in Table~\ref{tab1}.

\section{The arithmetic ingredient}
\label{sec:arithmetic}

For a nonzero integer $v$, define the $2$-adic valuation $\vTwo(v)$ by writing
$v=2^e w$, with $w$ odd. Then $\vTwo(v)=e$.  For a positive rational
number $a/b$, define
\[
  \vTwo(a/b)=\vTwo(a)-\vTwo(b).
\]
This is independent of the chosen representation, because
$a/b=c/d$ implies $ad=bc$, and the exponent of $2$ in a product is
the sum of the exponents in its factors.  In particular,
\begin{equation}
  \vTwo(xy)=\vTwo(x)+\vTwo(y),\qquad
  \vTwo(x/y)=\vTwo(x)-\vTwo(y).
  \label{eq:valuation-rules}
\end{equation}

Let $\ii^2=-1$, and put
\begin{equation}
 u_n=\ii^{t_n}=\ii^{s_2(n)},\qquad
 Z_0=0,\qquad Z_n=\sum_{j=0}^{n-1}u_j\quad(n\geq1).
 \label{eq:gaussian-walk}
\end{equation}
The equality of the two powers of $\ii$ uses $\ii^4=1$.  Each $u_j$
is one of $1,\ii,-1,-\ii$, so each $Z_n$ is a Gaussian integer.
For a Gaussian integer $X+\ii Y$, its squared modulus is the ordinary
nonnegative integer $X^2+Y^2$.

Equation~\eqref{eq:state-recursion} gives
\[
  u_{2n}=u_n,\qquad u_{2n+1}=\ii u_n.
\]
Pairing consecutive terms in the defining sum for $Z_{2n}$, we obtain
\begin{align*}
 Z_{2n}
   &=\sum_{j=0}^{n-1}(u_{2j}+u_{2j+1})
     =\sum_{j=0}^{n-1}(1+\ii)u_j
     =(1+\ii)Z_n,\\
 Z_{2n+1}&=Z_{2n}+u_{2n}=(1+\ii)Z_n+u_n.
\end{align*}
Equivalently, for $\varepsilon\in\{0,1\}$,
\begin{equation}
 u_{2n+\varepsilon}=\ii^\varepsilon u_n,
 \qquad Z_{2n+\varepsilon}=(1+\ii)Z_n+\varepsilon u_n.
 \label{eq:gaussian-recursion}
\end{equation}

The following lemma is from \cite{CK26}.
\begin{lemma}
\label{lem:valuation}
If $0\leq m<n$ and $t_m=t_n$, then $Z_n-Z_m\ne0$ and
\begin{equation}
  \vTwo\bigl(|Z_n-Z_m|^2\bigr)=\vTwo(n-m).
  \label{eq:key-valuation}
\end{equation}
\end{lemma}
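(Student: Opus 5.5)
The plan is strong induction on $n$ (or on $m+n$), using the recursion \eqref{eq:gaussian-recursion} to halve both indices, together with the fact that $|1+\ii|^2=2$. Write $m=2m'+\varepsilon$ and $n=2n'+\delta$ with $\varepsilon,\delta\in\{0,1\}$. By \eqref{eq:gaussian-recursion},
\[
  Z_n-Z_m=(1+\ii)\bigl(Z_{n'}-Z_{m'}\bigr)+\delta u_{n'}-\varepsilon u_{m'}.
\]
I would split into three cases according to the parities $\varepsilon,\delta$.

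\emph{Mixed parity} ($\varepsilon\ne\delta$). Here $n-m$ is odd, so the goal is to show that $|Z_n-Z_m|^2$ is odd. In this case $Z_n-Z_m=(1+\ii)A\pm u$ for some Gaussian integer $A$ and some unit $u\in\{1,\ii,-1,-\ii\}$. A Gaussian integer $X+\ii Y$ has odd norm $X^2+Y^2$ exactly when $X\not\equiv Y\pmod 2$. Equivalently, it has odd norm exactly when it is not divisible by $1+\ii$, since the multiples of $1+\ii$ are precisely those with $X\equiv Y\pmod 2$. A unit has $X\not\equiv Y$, and adding a multiple of $1+\ii$ preserves the parity of $X-Y$. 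Hence the norm is odd, so in particular $Z_n-Z_m\ne0$ and $\vTwo(|Z_n-Z_m|^2)=0=\vTwo(n-m)$. This case needs neither the induction hypothesis nor the hypothesis $t_m=t_n$, and it serves as the base of the induction.

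\emph{Both even} ($\varepsilon=\delta=0$). By \eqref{eq:state-recursion}, $t_{m'}=t_m=t_n=t_{n'}$, and also $m'<n'$ with $n'<n$ (note $n\ge 2$ here since $n>m\ge0$ and $n$ is even). The displayed identity reduces to $Z_n-Z_m=(1+\ii)(Z_{n'}-Z_{m'})$. The induction hypothesis gives $Z_{n'}\ne Z_{m'}$, so $Z_n\ne Z_m$. Taking norms multiplies by $2$, so
\[
  \vTwo\bigl(|Z_n-Z_m|^2\bigr)=1+\vTwo\bigl(|Z_{n'}-Z_{m'}|^2\bigr)=1+\vTwo(n'-m')=\vTwo(n-m),
\]
using \eqref{eq:valuation-rules}.

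\emph{Both odd} ($\varepsilon=\delta=1$). From $t_{2k+1}=t_k^+$ the hypothesis $t_m=t_n$ forces $t_{m'}=t_{n'}$, hence $u_{m'}=u_{n'}$. The unit terms then cancel in the displayed identity, and the argument is identical to the even case.

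The step I expect to need the most care is the mixed-parity case: it requires the parity characterization of norms of Gaussian integers, and it is what makes the induction well founded. Beyond that, the main points to check are that halving really preserves the hypothesis $t_{m'}=t_{n'}$ in the two equal-parity cases, and that $m'<n'<n$ there, so the induction hypothesis applies.
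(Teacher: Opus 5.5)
Your proof is correct and is essentially the paper's argument. The paper applies the same halving step $\vTwo(n-m)$ times, preserving $t$-equality and picking up a factor $|1+\ii|^2=2$ each time, and then uses the parity of the norm once the index difference is odd; your strong induction is that iteration unrolled, and your odd-difference case differs only cosmetically. There the paper writes $Z_{n'}-Z_{m'}$ as a sum of an odd number of units, each with odd coordinate sum, whereas you apply the recursion once more and use that multiples of $1+\ii$ are exactly the Gaussian integers with $X\equiv Y \pmod 2$, which is the same parity fact.
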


\begin{proof}
Let $e=\vTwo(n-m)$.  We first explain one halving step.  If $n-m$
is even, the indices have the same parity, so write
\[
  m=2a+\varepsilon,\qquad n=2b+\varepsilon,
  \qquad \varepsilon\in\{0,1\}.
\]
The hypothesis $t_m=t_n$ is equivalent to $u_m=u_n$.  The first
identity in \eqref{eq:gaussian-recursion} then gives
$\ii^\varepsilon u_a=\ii^\varepsilon u_b$, hence $u_a=u_b$ and
$t_a=t_b$.  The second identity consequently gives
\begin{align*}
 Z_n-Z_m
  &=(1+\ii)(Z_b-Z_a)+\varepsilon(u_b-u_a)\\
  &=(1+\ii)(Z_b-Z_a).
\end{align*}

After exactly $e$ such steps, there are indices $m'<n'$ with odd
difference and
\begin{equation}
  Z_n-Z_m=(1+\ii)^e(Z_{n'}-Z_{m'}).
  \label{eq:halving}
\end{equation}
If $e=0$, this means $m'=m$ and $n'=n$.

Write $Z_{n'}-Z_{m'}=X+\ii Y$.  This difference is a sum of the odd
number $n'-m'$ of Gaussian units $u_j$.  The sum of the real and
imaginary coordinates of each unit is either $1$ or $-1$, both odd.
It follows that $X+Y$ is odd.  Since an integer and its square have
the same parity,
\[
  X^2+Y^2\equiv X+Y\equiv1\pmod2.
\]
In particular, this squared modulus is positive and odd.  Taking
squared moduli in \eqref{eq:halving}, and using $|1+\ii|^2=2$, gives
\[
  |Z_n-Z_m|^2=2^e(X^2+Y^2).
\]
The right side is nonzero and has exponent of $2$ equal to $e$.
This proves both assertions.
\end{proof}

For a letter $(r,s)\in\Gamma$ and a finite block $x$ of $\word$, now write
\[
  C_x(r,s)=|x|_{(r,s)}.
\]
Let $e_0,e_1,e_2,e_3$ be the standard basis of $\Enn^4$.
To a pair $(r,s)$ assign the vector $e_r-e_s$.

For indices $\alpha<\beta$ and the block
$x=a_\alpha\cdots a_{\beta-1}$, summing these vectors gives
\begin{align}
 \sum_{r,s=0}^3 C_x(r,s)(e_r-e_s)
   &=\sum_{j=\alpha}^{\beta-1}(e_{t_j}-e_{t_{j+1}})\notag\\
   &=e_{t_\alpha}-e_{t_\beta}.
   \label{eq:telescoping}
\end{align}

\begin{lemma}
\label{lem:balance}
Suppose consecutive blocks
$x=a_\alpha\cdots a_{\beta-1}$ and
$y=a_\beta\cdots a_{\gamma-1}$, with $\alpha<\beta<\gamma$, have the
same normalized Parikh vectors.   Then
$ t_\alpha=t_\beta=t_\gamma$.
\end{lemma}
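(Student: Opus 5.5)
The plan is to use the telescoping identity \eqref{eq:telescoping} directly. Let $p=\beta-\alpha$ and $q=\gamma-\beta$. Equal normalized Parikh vectors means $C_y(r,s)=(q/p)\,C_x(r,s)$ for every letter $(r,s)\in\Gamma$. Applying the linear map $(r,s)\mapsto e_r-e_s$ to both blocks, \eqref{eq:telescoping} gives
\[
 e_{t_\beta}-e_{t_\gamma}=\sum_{r,s}C_y(r,s)(e_r-e_s)=\frac{q}{p}\sum_{r,s}C_x(r,s)(e_r-e_s)=\frac{q}{p}\bigl(e_{t_\alpha}-e_{t_\beta}\bigr).
\]
So the vector $e_{t_\beta}-e_{t_\gamma}$ is a positive scalar multiple of $e_{t_\alpha}-e_{t_\beta}$, and the remaining work is a short case analysis on vectors of this special form.

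First, suppose $t_\alpha\neq t_\beta$. Then the right side has exactly two nonzero coordinates, equal to $q/p$ at position $t_\alpha$ and $-q/p$ at position $t_\beta$. The left side is either zero or has entries $+1$ at position $t_\beta$ and $-1$ at position $t_\gamma$. It cannot be zero, because the right side is nonzero. Comparing supports, the $+1$ entry sits at position $t_\beta$, but the right side is negative there. This is a contradiction, so $t_\alpha=t_\beta$.

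Second, with $t_\alpha=t_\beta$ the right side vanishes. Hence $e_{t_\beta}=e_{t_\gamma}$, which gives $t_\beta=t_\gamma$.

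The only point needing care is the sign comparison in the first case, where positivity of $q/p$ is exactly what rules out $t_\gamma=t_\alpha$ with reversed roles. I expect no real obstacle. Strictly speaking, a weaker hypothesis suffices: the argument only uses that the two Parikh vectors are proportional with a positive constant.
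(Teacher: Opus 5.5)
Your proof is correct and follows essentially the same route as the paper: apply the telescoping identity to both blocks, then compare the coordinate indexed by $t_\beta$. The paper simply rearranges to $q e_{t_\alpha}+p e_{t_\gamma}=(p+q)e_{t_\beta}$ and reads that coordinate off in one step, with no case split. Your closing remark about a "weaker hypothesis" is harmless but not actually weaker: summing coordinates forces the proportionality constant to be $q/p$, so positive proportionality of the Parikh vectors is equivalent to equal normalized Parikh vectors.
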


\begin{proof}
Set $p=\beta-\alpha$ and $q=\gamma-\beta$.  The hypothesis says
\begin{equation}
  qC_x(r,s)=pC_y(r,s)\qquad(0\leq r,s\leq3).
  \label{eq:scaled-counts}
\end{equation}
Multiply each equality by $e_r-e_s$, sum, and apply
\eqref{eq:telescoping} to the two blocks.  The result is
\[
  q(e_{t_\alpha}-e_{t_\beta})
   =p(e_{t_\beta}-e_{t_\gamma}),
\]
which rearranges to
\begin{equation}
  q e_{t_\alpha}+p e_{t_\gamma}=(p+q)e_{t_\beta}.
  \label{eq:convex-balance}
\end{equation}
Look at the coordinate labeled $t_\beta$.  Its value on the left is
\[
 q\,\mathbf{1}_{\{t_\alpha=t_\beta\}}
 +p\,\mathbf{1}_{\{t_\gamma=t_\beta\}},
\]
and its value on the right is $p+q$.  Because $p$ and $q$ are both
strictly positive, equality requires both indicators to be $1$.
\end{proof}

\section{Proof that the sequence is good}
\label{sec:goodness}

\begin{theorem}
\label{thm:good}
The infinite word $\word$ defined by \eqref{eq:word} is good.
Equivalently, the standard-basis walk in $\Enn^{16}$ obtained from
$\rho(a_n)=4t_n+t_{n+1}$ has no three collinear vertices.
\end{theorem}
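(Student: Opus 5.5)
Suppose, for contradiction, that $\word$ contains consecutive blocks $x=a_\alpha\cdots a_{\beta-1}$ and $y=a_\beta\cdots a_{\gamma-1}$ with $\alpha<\beta<\gamma$ and equal normalized Parikh vectors. Set $p=\beta-\alpha$ and $q=\gamma-\beta$. By Lemma~\ref{lem:balance} we have $t_\alpha=t_\beta=t_\gamma$. This is exactly the hypothesis needed to apply Lemma~\ref{lem:valuation} to each of the pairs $(\alpha,\beta)$ and $(\beta,\gamma)$. The equivalence with the walk statement then follows from Proposition~\ref{prop:walk} applied to the coding $\rho$, which is a bijection $\Gamma\to\{0,\ldots,15\}$ and so preserves goodness.

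The next step is to transfer the proportionality of Parikh vectors to the Gaussian walk. Each letter $a_j=(t_j,t_{j+1})$ determines $u_j=\ii^{t_j}$, because $u_j$ depends only on the first coordinate of the letter. Hence $Z_\beta-Z_\alpha=\sum_{(r,s)}C_x(r,s)\,\ii^r$, and similarly $Z_\gamma-Z_\beta=\sum_{(r,s)}C_y(r,s)\,\ii^r$. The relation $qC_x=pC_y$ from \eqref{eq:scaled-counts} then gives
\[
 q\,(Z_\beta-Z_\alpha)=p\,(Z_\gamma-Z_\beta).
\]
Both differences are nonzero by Lemma~\ref{lem:valuation}. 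Taking squared moduli gives $q^2|Z_\beta-Z_\alpha|^2=p^2|Z_\gamma-Z_\beta|^2$.

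Now apply $\vTwo$ to this identity, using \eqref{eq:valuation-rules} and \eqref{eq:key-valuation}. Write $a=\vTwo(p)$ and $b=\vTwo(q)$. The left side has valuation $2b+a$ and the right side has valuation $2a+b$. Equality forces $a=b$.

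The main obstacle is that this conclusion alone is not a contradiction. I need a second piece of information that distinguishes $p$ from $q$, or I need to iterate. My first attempt would be a descent argument. When $a=b\ge1$, all of $\alpha,\beta,\gamma$ have the same parity. I would then try to use the $2$-uniform morphism $H$ of Proposition~\ref{prop:morphism} to pull the configuration back to indices $\alpha',\beta',\gamma'$ with halved lengths $p/2,q/2$. This requires checking that the Parikh proportionality descends, either from the explicit relationship $C_{H(w)}$ in terms of $C_w$ or from the complex relation $Z_{2n+\varepsilon}=(1+\ii)Z_n+\varepsilon u_n$. After this descent the valuations become $0$, so $p$ and $q$ are both odd.

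In the odd case I would bring in a further invariant. One option is the full Gaussian equation $q\,\Delta_1=p\,\Delta_2$ rather than only its modulus. Another is the telescoping vector identity applied to the second coordinates, i.e.\ the counts $\sum_r C(r,s)$, combined with the odd-parity property $X+Y\equiv1$ from the proof of Lemma~\ref{lem:valuation}. If neither gives a contradiction directly, I would look at the $(1+\ii)$-adic valuation of $q\Delta_1-p\Delta_2$ modulo $(1+\ii)^2$ and use $p,q$ odd to force a contradiction. This final odd-length step is where I expect the real difficulty to lie.
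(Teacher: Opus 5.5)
Your proposal has a genuine gap: it correctly reaches $\vTwo(p)=\vTwo(q)$, but as you note yourself this is not a contradiction, and the proof is never closed. The missing idea is the third pair of indices. Lemma~\ref{lem:balance} gives $t_\alpha=t_\gamma$ as well, so Lemma~\ref{lem:valuation} also applies to $(\alpha,\gamma)$. With $X=Z_\beta-Z_\alpha$ and $Y=Z_\gamma-Z_\beta$ it gives $\vTwo(|X+Y|^2)=\vTwo(p+q)$. From $qX=pY$ you get $X+Y=\frac{p+q}{p}X$, hence
\[
\vTwo(p+q)=\vTwo\bigl(|X+Y|^2\bigr)=2\vTwo(p+q)-2\vTwo(p)+\vTwo(p),
\]
that is, $\vTwo(p+q)=\vTwo(p)$. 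Combine this with your $\vTwo(p)=\vTwo(q)=d$ and write $p=2^dp_0$, $q=2^dq_0$ with $p_0,q_0$ odd. Then $p_0+q_0$ is even, so $\vTwo(p+q)\geq d+1$, a contradiction. This is exactly the paper's proof. There the common slope $V=X/p=Y/q=(X+Y)/(p+q)$ has $\vTwo(|V|^2)$ equal to $-\vTwo(p)$, $-\vTwo(q)$ and $-\vTwo(p+q)$ at once, which is impossible.

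The descent and the separate odd-case analysis are therefore unnecessary. In the odd case the third pair finishes immediately: $p+q$ is even, yet it is forced to have $\vTwo(p+q)=\vTwo(p)=0$. The descent is also not justified as you state it. When $\alpha,\beta,\gamma$ are odd, the blocks are not $H$-images of blocks. When they are even, $\Par(H(w))=M\,\Par(w)$ for a linear map $M$ with nontrivial kernel: for example, $e_{(0,2)}-e_{(1,3)}+e_{(2,0)}-e_{(3,1)}\mapsto 0$. Letters $(r,s)$ with $s\equiv r+2\pmod 4$ really occur in $\word$ (e.g.\ $a_7=(3,1)$), so proportionality of Parikh vectors does not automatically pull back to the halved blocks. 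Finally, the $(1+\ii)$-adic analysis of $q\Delta_1-p\Delta_2$ cannot help, since that quantity is identically $0$ by the relation you already derived.
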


\begin{proof}
Assume, to get a contradiction, that consecutive nonempty blocks $x,y$
have identical normalized Parikh vectors.  Write their positions as
$[\alpha,\beta)$ and $[\beta,\gamma)$, and set
$p=\beta-\alpha>0$, $q=\gamma-\beta>0$.  Equation
\eqref{eq:scaled-counts} holds, and Lemma~\ref{lem:balance} gives
\begin{equation}
  t_\alpha=t_\beta=t_\gamma.
  \label{eq:same-states}
\end{equation}

Assign $(r,s)$ the complex number $\ii^r$.  Since the first coordinate of
$a_j$ is $t_j$, grouping gives
\begin{align*}
 \sum_{r,s=0}^3 C_x(r,s)\ii^r
   &=\sum_{j=\alpha}^{\beta-1}\ii^{t_j}
     =Z_\beta-Z_\alpha,\\
 \sum_{r,s=0}^3 C_y(r,s)\ii^r
   &=\sum_{j=\beta}^{\gamma-1}\ii^{t_j}
     =Z_\gamma-Z_\beta.
\end{align*}
Consequently, multiplying \eqref{eq:scaled-counts} by $\ii^r$ and
summing yields
\[
  q(Z_\beta-Z_\alpha)=p(Z_\gamma-Z_\beta).
\]
Put $X=Z_\beta-Z_\alpha$ and $Y=Z_\gamma-Z_\beta$.
There is a common complex number $V=X/p=Y/q$.  Adding the identities
$X=pV$, $Y=qV$ shows that
\begin{equation}
 \frac{X}{p}=\frac{Y}{q}=\frac{X+Y}{p+q}=V.
 \label{eq:slopes}
\end{equation}

By \eqref{eq:same-states}, Lemma~\ref{lem:valuation} applies to each
of the three pairs of indices $(\alpha,\beta)$,
$(\beta,\gamma)$, and $(\alpha,\gamma)$.  Therefore
\begin{align}
 \vTwo(|X|^2)&=\vTwo(p),\notag\\
 \vTwo(|Y|^2)&=\vTwo(q),\notag\\
 \vTwo(|X+Y|^2)&=\vTwo(p+q).
 \label{eq:three-valuations}
\end{align}
The same lemma also proves that all three displacements are nonzero.
In particular $|V|^2$ is a positive rational number, so its valuation
is defined.

Using the first expression for $V$ in \eqref{eq:slopes}, followed by
\eqref{eq:valuation-rules} and \eqref{eq:three-valuations}, gives
\begin{align*}
 \vTwo(|V|^2)
  &=\vTwo\left(\frac{|X|^2}{p^2}\right)
    =\vTwo(|X|^2)-2\vTwo(p)\\
  &=\vTwo(p)-2\vTwo(p)=-\vTwo(p).
\end{align*}
Using the other two expressions for $V$ in exactly the same way gives
\[
  \vTwo(|V|^2)=-\vTwo(q)=-\vTwo(p+q).
\]
Thus
\begin{equation}
  \vTwo(p)=\vTwo(q)=\vTwo(p+q).
  \label{eq:impossible}
\end{equation}
Let the common value of the first two terms be $d$.  Then
$p=2^d p_0$ and $q=2^d q_0$ for positive odd integers $p_0,q_0$.
Their sum is even, so
\[
  p+q=2^d(p_0+q_0)
  \quad\Longrightarrow\quad \vTwo(p+q)\geq d+1,
\]
contradicting \eqref{eq:impossible}.  No such blocks $x,y$ can exist.

Relabeling letters by the bijection $\rho$ preserves all frequency
equalities.  Proposition~\ref{prop:walk} now gives the asserted
geometric conclusion.
\end{proof}

\section{The dimension bound and the proposed smaller morphisms}
\label{sec:bounds}

Define $k_{\min}$ to be the least size of an alphabet admitting a good
infinite word.  Theorem~\ref{thm:good} proves that this minimum exists
and is at most $16$.  

As mentioned above, we know that $k_{\min} \geq 4$.  Therefore
it is still of interest to determine its exact value.

I found a number of candidate cyclic morphisms, but have been unable to prove
that they work.  I list them here in the hope that they may aid
other researchers to improve the bound $k_{\min} \leq 16$.

\begin{center}
\begin{tabular}{@{}ccc@{}}
\toprule
Alphabet size $k$ & Proposed image of $0$ & Image length\\
\midrule
$5$ & $01213101314310$ & $14$\\
$6$ & $01210140$       & $8$\\
$7$ & $012520$         & $6$\\
$8$ & $012560$         & $6$\\
\bottomrule
\end{tabular}
\end{center}
For these proposals, the image of letter $r$ is obtained by adding
$r$ modulo $k$ to every letter in the displayed image of $0$.

\section{Declaration of AI usage}

I originally proposed this problem about walks on unit vectors in June 2018,
and mentioned it to many people.  The recent construction of Cambie
and Kalviainen, and the release of GPT-6 Astra, led me to try this new LLM
on the unit vector version of the problem.
Much of this paper was written by ChatGPT 6 Astra.

\end{document}